\newtheorem{Thm}{Theorem}[section]
\newtheorem{Lem}[Thm]{Lemma}
\newtheorem{Prop}[Thm]{Proposition}
\newtheorem{Def}[Thm]{Definition}
\newenvironment{altproof}[1]   
{\noindent   
{\em Proof of {#1}}.}   
{\nopagebreak\mbox{}\hfill $\Box$\par\addvspace{0.5cm}}
\newcommand{\R}{\mathbb{R}}
\newcommand{\ut}{\tilde{u}} 
\newcommand{\utp}{\tilde{u}^+}
\newcommand{\utm}{\tilde{u}^-}
\newcommand{\uh}{\hat{u}}
\newcommand{\uhp}{\hat{u}^+}
\newcommand{\uhm}{\hat{u}^-}
\newcommand{\ub}{\bar{u}}
\newcommand{\bu}{\underline{u}}
\newcommand{\vt}{\tilde{v}} 
\newcommand{\vtp}{\tilde{v}^+}
\newcommand{\vtm}{\tilde{v}^-}
\newcommand{\vh}{\hat{v}}
\newcommand{\vhp}{\hat{v}^+}
\newcommand{\utn}{\tilde{u}_n} 
\newcommand{\utpn}{\tilde{u}^+_n}
\newcommand{\utmn}{\tilde{u}^-_n}
\newcommand{\uhn}{\hat{u}_n}
\newcommand{\uhpn}{\hat{u}^+_n}
\newcommand{\uhmn}{\hat{u}^-_n}
\newcommand{\ubn}{\bar{u}_n}
\newcommand{\bun}{\underline{u}_n}
\newcommand{\whp}{\hat{w}^+}
\newcommand{\ot}{\tilde{\omega}}
\newcommand{\ohat}{\hat{\omega}}
\newcommand{\ob}{\bar{\omega}}
\newcommand{\rt}{\tilde{r}}
\newcommand{\st}{\tilde{s}}
\newcommand{\that}{\hat{t}}
\newcommand{\rtn}{\tilde{r}_n}
\newcommand{\stn}{\tilde{s}_n}
\newcommand{\thn}{\hat{t}_n}
\newcommand{\hozo}{H^1_0(\Omega)}
\newcommand{\hoz}{\underline{H}}
\newcommand{\iu}{I_\mu}
\newcommand{\ap}{a^+}
\newcommand{\am}{a^-}
\newcommand{\nmu}{{\cal{N}}_\mu}
\newcommand{\up}{\frac{\mu}{p}}
\newcommand{\op}{\frac{1}{p}}
\newcommand{\oh}{\frac{1}{2}}
\begin{document}
\begin{center}
{\Large\bf 
Multibump nodal solutions for an indefinite superlinear elliptic problem\footnote{2000 
Mathematics Subject Classification: 35J65 (35J20)\\
\indent Keywords: Multibump solutions, Nehari manifold, sign-changing solutions, elliptic equations}}\\ 
\ \\
Pedro M.\ Girão\footnote{Email: pgirao@math.ist.utl.pt. Partially supported by the Center for Mathematical
Analysis, Geometry and Dynamical Systems through FCT Program POCTI/FEDER and
by grant POCI/FEDER/MAT/55745/2004.} and José Maria Gomes\footnote{Email:
jgomes@math.ist.utl.pt. Supported by FCT
grant SFRH/BPD/29098/2006.}
\\
Instituto Superior Técnico\\
Av.\ Rovisco Pais\\
1049-001 Lisbon, Portugal
\end{center}

\begin{center}
{\bf Abstract}\\
\end{center}
\noindent We define some Nehari-type constraints 
using an orthogonal decomposition of the Sobolev space $H^1_0$
and prove the existence 
of multibump nodal solutions for an indefinite superlinear elliptic problem.

\section{Introduction}
Consider a Lipschitz bounded domain $\Omega\subset\R^N$, $N\geq 1$, and a function
$a\in C(\bar{\Omega})$, with $a=\ap-\am$, where $\ap=\max\{a,0\}$ as usual.
Assume the set $\ap>0$ is the union of a finite number, $L\geq 1$, of open connected and
disjoint Lipschitz components. We separate the components arbitrarily into three families 
\begin{eqnarray*}
\Omega^+=\{x\in\Omega\::\:\ap(x)>0\}&=&\displaystyle
\left(\cup_{i=1}^I\tilde{\omega}_i\right)\cup\left(\cup_{j=1}^J\hat{\omega}_j\right)\cup\left(\cup_{k=1}^K\bar{\omega}_k\right)\nonumber\\
&=&\tilde{\Omega}\cup\hat{\Omega}\cup\underline{\Omega},\nonumber\\
\noalign{\noindent so that $L=I+J+K$; we also assume}\nonumber\\
\Omega^-=\{x\in\Omega\::\:\am(x)>0\}&=&\displaystyle\Omega\setminus\overline{\Omega^+}.\nonumber
\end{eqnarray*}
Let $\mu>0$  and $p$ be a super\-quadratic
and subcritical exponent, $2<p<2^*$, with $2^*=2N/(N-2)$ for $N\geq 3$, and
$2^*=+\infty$ for $N=1$ or 2. Our main result is
\begin{Thm}\label{zero}
For every large $\mu$, there exists an $\hozo$ weak solution $u_\mu$ of
\begin{equation}
\label{one-one}
-\Delta u=(\ap-\mu\am)|u|^{p-2}u\quad\mbox{in}\ \Omega.
\end{equation}
Furthermore, the family $\{u_\mu\}$ has the property that (modulo a subsequence)
\begin{equation}
\label{twenty-three}
u_\mu\rightharpoonup u\quad\mbox{in}\ \hozo\ \mbox{as}\ \mu\to+\infty,
\end{equation}
where 
$$
\left\{\begin{array}{l}
-\Delta u=\ap|u|^{p-2}u\quad\mbox{in}\ \ot_i,\\
u^\pm\not\equiv 0\quad\mbox{in}\ \ot_i,
\end{array}\right.\qquad i=1,\ldots,I,
$$
$$
\left\{\begin{array}{l}
-\Delta u=\ap|u|^{p-2}u\quad\mbox{in}\ \ohat_j,\\
u^+\not\equiv 0,\ u^-\equiv 0\quad\mbox{in}\ \ohat_j,
\end{array}\right.\qquad j=1,\ldots,J,
$$
$$
u\equiv 0\quad\mbox{in}\ \ob_k,\qquad k=1,\ldots,K,
$$
and
$$
u\equiv 0\quad\mbox{in}\ \Omega^-. 
$$
\end{Thm}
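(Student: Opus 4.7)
My plan is to obtain $u_\mu$ as a minimizer of the energy functional
$$I_\mu(u) = \tfrac{1}{2}\int_\Omega |\nabla u|^2\,dx - \tfrac{1}{p}\int_\Omega (a^+-\mu a^-)|u|^p\,dx$$
on a Nehari-type set $\mathcal{N}_\mu$ whose very definition imprints the target multibump nodal pattern. The central tool is the $H^1_0$-orthogonal decomposition
$$H^1_0(\Omega) = \bigoplus_{i=1}^I H^1_0(\tilde\omega_i) \oplus \bigoplus_{j=1}^J H^1_0(\hat\omega_j) \oplus \bigoplus_{k=1}^K H^1_0(\bar\omega_k) \oplus W,$$
where each factor is extended by zero and $W$ is the orthogonal complement (weakly harmonic on $\Omega^+$, carrying all the mass on $\Omega^-$). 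Writing $u = \sum_i\tilde u_i + \sum_j\hat u_j + \sum_k\bar u_k + w$, membership in $\mathcal{N}_\mu$ requires three blocks of conditions: first, $\bar u_k\equiv 0$ for each $k$; second, for each $i$ both $\tilde u_i^\pm\not\equiv 0$ with the two Nehari identities $\int_{\tilde\omega_i}|\nabla\tilde u_i^\pm|^2=\int_{\tilde\omega_i}a^+|\tilde u_i^\pm|^p$; third, for each $j$, $\hat u_j\ge 0$, $\hat u_j\not\equiv 0$, and the analogous single Nehari identity on $\hat\omega_j$. The $w$-component is fixed by minimising $I_\mu(\tilde u+\hat u+w)$ over $W$, a problem that becomes strictly convex for $\mu$ large thanks to the penalty $\mu\int a^-|w|^p$.

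The proof then splits into three steps. Existence: $I_\mu|_{\mathcal{N}_\mu}$ is coercive by the Nehari identity $I_\mu(u) = (\tfrac{1}{2}-\tfrac{1}{p})\|u\|^2_{H^1_0}$, and weakly lower semicontinuous by the subcritical compact embedding $H^1_0\hookrightarrow L^p$, so a minimizer $u_\mu$ exists. Naturality: standard arguments show that $u_\mu$ is a free critical point of $I_\mu$, the Lagrange multipliers attached to the finitely many constraints vanishing because the associated Jacobian block is nondegenerate (each constraint $g_i(t)=\int|\nabla(t\tilde u_i^\pm)|^2-\int a^+|t\tilde u_i^\pm|^p$ satisfies $g_i'(1)=(2-p)\|\nabla\tilde u_i^\pm\|_2^2\neq 0$ since $p>2$). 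Asymptotics: a uniform upper bound on $\inf_{\mathcal{N}_\mu} I_\mu$ is obtained by inserting $\mu$-independent test functions supported in $\tilde\Omega\cup\hat\Omega$; combined with the Nehari identity this yields uniform $H^1_0$-bounds on $u_\mu$, and boundedness of $\mu\int a^-|u_\mu|^p$ along the weak limit $u_\mu\rightharpoonup u$ forces $u\equiv 0$ on $\Omega^-$.

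The principal obstacle is preserving the nodal structure through the limit: one must show that $u^\pm\not\equiv 0$ on each $\tilde\omega_i$ and $u^+\not\equiv 0$ on each $\hat\omega_j$. For this the Nehari identity combined with the Sobolev inequality applied to $\tilde u_{i,\mu}^\pm$ on the fixed component $\tilde\omega_i$ gives uniform lower bounds $\|\tilde u_{i,\mu}^\pm\|_{H^1_0(\tilde\omega_i)}\ge c>0$, and the compact embedding $H^1_0\hookrightarrow L^p$ transfers these bounds to the weak limit. A secondary difficulty is showing that the $w$-component converges strongly to zero on $\Omega^-$ and on each $\bar\omega_k$, so that in the limit the bumps decouple into the stated boundary-value problems on $\tilde\omega_i$ and $\hat\omega_j$: the former follows from the $\mu$-penalty, while on each $\bar\omega_k$ it follows from harmonicity of $w_\mu$ together with vanishing boundary data in the limit. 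Together these yield the stated equations on each component.
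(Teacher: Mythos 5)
Your overall strategy (orthogonal decomposition of $H^1_0(\Omega)$ into the component spaces plus a harmonic remainder, minimization of $I_\mu$ on a Nehari-type set, uniform lower bounds on $\|\tilde u^\pm_{i,\mu}\|$ via Nehari-plus-Sobolev, and the $\mu$-penalty forcing $u\equiv 0$ on $\Omega^-$) is the same as the paper's. But there is a genuine gap at the step you call ``Naturality''. Your constraint set is cut out by conditions on the positive and negative parts $\tilde u_i^\pm$, $\hat u_j^+$, and the map $u\mapsto u^\pm$ is not $C^1$ on $H^1_0$; consequently the set is \emph{not} a $C^1$ manifold (this is exactly the point the paper flags, citing Bartsch--Weth), and the Lagrange multiplier theorem you invoke, with the computation $g'(1)=(2-p)\|\nabla \tilde u_i^\pm\|_2^2\neq 0$, is simply not available. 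Showing that a minimizer on such a non-smooth Nehari-type set is a free critical point is the hardest part of the problem; the paper devotes all of Section~4 to it, replacing the multiplier argument by a local negative-gradient deformation combined with a Brouwer degree argument on the three-parameter surface $\varsigma(\tilde r,\tilde s,\hat t)=\tilde r\tilde u^+-\tilde s\tilde u^-+\hat t\hat u^+-\hat u^-+\underline u+\bar u$, using Lemma~\ref{L_four} to identify $(1,1,1)$ as a strict absolute maximum. Your proposal contains no substitute for this.

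A second, related defect is that you impose $\bar u_k\equiv 0$ and $\hat u_j\ge 0$ as hard constraints. These are not ``finitely many'' scalar constraints: the first is an infinite-codimension linear constraint and the second an obstacle-type constraint, so a minimizer subject to them satisfies only a variational inequality (and the equation tested against admissible variations), not $I_\mu'(u_\mu)=0$; in particular you cannot test with $\varphi\in\hoz(\ob_k)$ and you do not obtain a weak solution of (\ref{one-one}) on all of $\Omega$ for finite $\mu$, which is precisely what Theorem~\ref{zero} asserts. The paper avoids this by \emph{not} forcing $\hat u^-$, $\bar u$, $\underline u$ to vanish on $\nmu$: it only requires smallness ((${\cal N}_{iv}$), (${\cal N}_{v}$)) and proves via the energy comparisons of Lemma~\ref{L_three} that these inequality constraints are strictly inactive at minimizers for $\mu$ large, with the components vanishing only in the limit $\mu\to+\infty$; the sign information on $\hat\omega_j$ and the vanishing on $\bar\omega_k$, $\Omega^-$ are conclusions, not constraints. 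Finally, your existence step also glosses over the fact that the weak limit of a minimizing sequence need not satisfy your exact Nehari identities (norms can drop in the weak limit); the paper repairs this by rescaling the components with the factors $(\tilde r,\tilde s,\hat t)$ and using Lemma~\ref{L_four} to rule out strict inequality, an argument your sketch would also need.
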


The one-dimensional version of (\ref{one-one}) was studied in
\cite{GHZ} with topological shooting arguments and
phase-plane analysis.
Theorem~\ref{zero} extends the main result in \cite{BGH} where the case $\tilde{\Omega}=\emptyset$ was considered,
so that the function $u$ in (\ref{twenty-three}) was positive.
The authors used a volume constrain regarding the $L^p$ norm, rescaling
and a min-max argument based on the Mountain Pass Lemma. A careful analysis allowed them to distinguish
between the solutions that arise from 
the $2^L$ different possible partitionings of
$\Omega^+=\hat{\Omega}\cup\underline{\Omega}$. However,
the argument in \cite{BGH} does not seem either to extend easily to the present situation or
to be suited to non-homogeneous nonlinearities.

Our approach is adapted from the work \cite{RT}
regarding a system of equations related to
$$
\left\{\begin{array}{l}
-\epsilon^2\Delta u+V(x)u=f(u)\quad\mbox{in}\ \Omega,\\
u>0\quad\mbox{in}\ \Omega,
\end{array}\right.
$$
when $\epsilon$ is small and the functions $V$ and $f$ satisfy
appropriate conditions. 
The positive function $V$ was assumed to have a finite number of minima.
In particular,
the authors proved the existence of multipeak positive solutions 
by defining a Nehari-type manifold which, roughly speaking,
imposes that the derivative of the associated Euler-Lagrange functional
at a function $u$ should vanish when applied to a truncation
of $u$ around a minimum of the potential function $V$.

The perspective of \cite{RT} is related to the one of \cite{LW}
which, using Nehari conditions and a cut-off operator,
simplifies the original techniques for 
gluing together mountain-pass type solutions of \cite{CZR1}, \cite{CZR2} and \cite{S}.

Our method consists in defining a Nehari-type set, $\nmu$,
by imposing that the derivative of the associated Euler-Lagrange functional
at a function $u$ should vanish when applied to the positive
and negative parts of some projections of $u$.
The idea to use these projections
is borrowed from \cite{BGH}, where they are also used, but in a different way.

We prove that the Euler-Lagrange functional associated to (\ref{one-one})
has a minimum over the set $\nmu$ using
an argument similar to the one found in \cite{CCN}. Since our
set $\nmu$ is not a manifold (see \cite[Lemma 3.1]{BW}),
one has to demonstrate, as in \cite{CSS}, that the minima are indeed critical points.
As mentioned above, in the case that $\tilde{\Omega}=\emptyset$
we recover the main result of \cite{BGH}, but with a simpler proof.

Our results are somewhat parallel to the ones 
of singular perturbation problems like in \cite{dPF}.
The large parameter $\mu$ in (\ref{one-one}) plays the role of the small parameter
$\epsilon$.
The solutions concentrate in the set $\tilde{\Omega}\cup\hat{\Omega}$
and vanish in the set $\underline{\Omega}\cup\Omega^-$ as $\mu\to+\infty$.

In \cite{AW} flow invariance properties together with
a weak splitting condition proved the existence of 
infinitely many geometrically distinct two bump solutions
of a periodic superlinear Schr\"{o}dinger equation.
The paper \cite{BCW} is concerned with the singular perturbed 
equation above. As a special case, the authors observed the existence of multiple pairs of concentrating nodal 
solutions at an isolated minimum of the potential.

There has been much interest in elliptic problems with a sign
changing weight. We refer to \cite{AdP},
\cite{AT}, \cite{BCN}, \cite{CRT}, \cite{R}, \cite{RTT}, \cite{T}  and the references therein.

For simplicity we restrict the proof to the case where
$I=J=K=1$, but it extends
to the other ones as well.
The work is organized as follows. In Section~2 
we provide estimates for minimizing sequences
on the set $\nmu$. In Section~3 we prove the existence of a minimizer
in the set $\nmu$. Finally, in Section~4 we prove that
a minimizer in the set $\nmu$ is a critical point using a
local deformation and a degree argument similar to the one in \cite{CW}.

\section{Estimates for minimizing sequences on a Nehari-type set $\nmu$}
As mentioned in the Introduction,
we consider a Lipschitz bounded domain $\Omega\subset\R^N$, $N\geq 1$, and a function
$a\in C(\bar{\Omega})$. 
We assume the set $\ap>0$ is the union of three Lipschitz components,
\begin{eqnarray}
\{x\in\Omega\::\:\ap(x)>0\}&=&\ot\cup\ohat\cup\ob,\nonumber\\
\noalign{\noindent and}
\{x\in\Omega\::\:\am(x)>0\}&=&\Omega\setminus\overline{(\ot\cup\ohat\cup\ob)}.
\label{fourteen}
\end{eqnarray}
We introduce a positive parameter $\mu$ and
consider $2<p<2^*$.

We denote by $\left\langle\ ,\ \right\rangle$ the usual inner product
on the Sobolev space $\hozo$, i.e.\
$\left\langle u ,v \right\rangle=\int\nabla u\cdot\nabla v$ for $u$, $v\in\hozo$.
When the region of integration is not specified it is understood that the integrals
are over $\Omega$. We denote by $\left\|\ \right\|$ the induced norm.
We define the spaces
\begin{eqnarray*}
\hoz(\ot)&=&\left\{u\in\hozo\::\:u=0\ \mbox{in}\ \Omega\setminus\ot\right\},\\
\hoz(\ohat)&=&\left\{u\in\hozo\::\:u=0\ \mbox{in}\ \Omega\setminus\ohat\right\},\\
\hoz(\ob)&=&\left\{u\in\hozo\::\:u=0\ \mbox{in}\ \Omega\setminus\ob\right\},
\end{eqnarray*}
which can be obtained from the spaces $H^1_0(\ot)$, $H^1_0(\ohat)$, $H^1_0(\ob)$
by extending functions as zero on
$\Omega\setminus\ot$, $\Omega\setminus\ohat$, $\Omega\setminus\ob$, respectively.

Each $u\in\hozo$ can be decomposed as 
$$u=\ut+\uh+\ub+\bu,$$
with $\ut$, $\uh$ and $\ub$ the projections of $u$ on $\hoz(\ot)$,
$\hoz(\ohat)$ and $\hoz(\ob)$, respectively. 
We recall the projections are defined by
\begin{eqnarray*}
\ut\in\hoz(\ot)\::\: \forall\,{\varphi\in\hoz(\ot)},\quad\left\langle u,\varphi\right\rangle&=&\left\langle \ut,\varphi\right\rangle,\\
\uh\in\hoz(\ohat)\::\: \forall\,{\varphi\in\hoz(\ohat)},\quad\left\langle u,\varphi\right\rangle&=&\left\langle \uh,\varphi\right\rangle,\\
\ub\in\hoz(\ob)\::\: \forall\,{\varphi\in\hoz(\ob)},\quad\left\langle u,\varphi\right\rangle&=&\left\langle \ub,\varphi\right\rangle.
\end{eqnarray*}
Clearly, these projections are orthogonal and continuous with respect to the weak topology.
The function $\bu$ is harmonic in $\ot\cup\ohat\cup\ob$.

The following is Theorem~\ref{zero} in the case when $I=J=K=1$.
\begin{Prop}\label{prop}
For every large $\mu$, there exists an $\hozo$ weak solution $u_\mu$ of
\begin{equation}
\label{one}
-\Delta u=(\ap-\mu\am)|u|^{p-2}u\quad\mbox{in}\ \Omega.
\end{equation}
Furthermore, the family $\{u_\mu\}$ has the property that, modulo a subsequence,
\begin{equation}
\label{fifteen}
u_\mu\rightharpoonup u\quad\mbox{in}\ \hozo\ \mbox{as}\ \mu\to+\infty,
\end{equation}
where 
\begin{equation}
\label{sixteen}
u=\ut+\uh,
\end{equation}
\begin{equation}
\label{seventeen}
\left\{\begin{array}{l}
-\Delta\ut=\ap|\ut|^{p-2}\ut\quad\mbox{in}\ \ot,\\
\ut^\pm\not\equiv 0,
\end{array}\right.
\end{equation}
and
\begin{equation}
\label{eighteen}
\left\{\begin{array}{l}
-\Delta\uh=\ap|\uh|^{p-2}\uh\quad\mbox{in}\ \ohat,\\
\uhp\not\equiv 0,\ \uhm\equiv 0.
\end{array}\right.
\end{equation}
\end{Prop}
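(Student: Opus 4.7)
The natural approach is to produce $u_\mu$ as a minimizer of the Euler-Lagrange functional
\[
I_\mu(u) = \tfrac{1}{2}\|u\|^2 - \tfrac{1}{p}\int(\ap-\mu\am)|u|^p
\]
on a Nehari-type set $\nmu$. Guided by the target profile \eqref{seventeen}--\eqref{eighteen}, I would define $\nmu$ by requiring that $\ut^+,\ut^-,\uhp$ be nontrivial and satisfy the Nehari-type identities $\langle I'_\mu(u),\ut^+\rangle=\langle I'_\mu(u),\ut^-\rangle=\langle I'_\mu(u),\uhp\rangle=0$, together with suitable sign/smallness conditions on $\uhm$, $\ub$ and $\bu$ that prevent the energy from being unbounded below. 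The sign-changing structure on $\ot$ is enforced by demanding that \emph{both} signs of $\ut$ sit above the Nehari threshold, while positivity on $\ohat$ emerges because only $\uhp$ is constrained to do so.

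The first task, carried out in Section~2, is to obtain uniform estimates for minimizing sequences on $\nmu$. Using $p>2$, the Nehari identities let one rewrite $I_\mu(u)$ as a fixed positive multiple of $\int(\ap-\mu\am)|u|^p$ on the constrained pieces, so $I_\mu|_{\nmu}$ is bounded below and $c_\mu:=\inf_{\nmu}I_\mu>0$. Combining the identities with Sobolev and Poincaré inequalities yields uniform lower bounds for $\|\ut^\pm\|$ and $\|\uhp\|$, while the $\mu\am$ penalty forces the $\Omega^-$-components of any minimizing sequence to be small. Boundedness in $\hozo$ then follows. Section~3 produces a minimizer via weak compactness: extract a weakly convergent subsequence, use the compact embedding $\hozo\hookrightarrow L^p$ (since $p<2^*$) to pass to the limit in the nonlinear terms, check that the nontriviality thresholds survive, and conclude by weak lower semicontinuity that $u_\mu\in\nmu$ with $I_\mu(u_\mu)=c_\mu$.

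Section~4 is what I expect to be the principal obstacle: since $\nmu$ is not a manifold (\cite[Lem.~3.1]{BW}), Lagrange multipliers are unavailable. Following the strategy of \cite{CCN,CSS,CW}, I would argue by contradiction: if $I'_\mu(u_\mu)\neq 0$, construct a localized $C^1$-deformation of $u_\mu$ that strictly lowers $I_\mu$, and then show that after a small correction the deformed function still lies in $\nmu$. The correction amounts to solving the three Nehari constraints along a finite-dimensional family of rescalings of $\ut^\pm,\uhp$; this is handled by a topological degree computation that exploits the strict convexity along rescaling directions coming from $p>2$.

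Finally, for the asymptotic statement, I would bound $c_\mu$ uniformly in $\mu$ by exhibiting admissible test functions supported in $\ot\cup\ohat$, which gives a uniform $\hozo$-bound on $u_\mu$ and hence a weak limit $u$. The bound $\mu\int\am|u_\mu|^p\leq C$ forces $u\equiv 0$ on $\Omega^-$. A comparison argument shows that any nontrivial contribution of $u_\mu$ on $\ob$ would raise $c_\mu$ by at least the least-energy Nehari level on $\ob$, contradicting the uniform upper bound unless $u\equiv 0$ on $\ob$; the same reasoning forces $\uhm\to 0$. Passing to the limit in \eqref{one} restricted to $\ot$ and $\ohat$ then yields \eqref{seventeen}--\eqref{eighteen}, and the nontriviality of $\ut^\pm$ and $\uhp$ persists by the uniform lower bounds inherited from Section~2.
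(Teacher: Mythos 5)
Your outline follows the paper's route closely: the same Nehari-type set built from the orthogonal projections $\ut,\uh,\ub,\bu$ with equalities $\iu^\prime(u)(\utp)=\iu^\prime(u)(\utm)=\iu^\prime(u)(\uhp)=0$ plus smallness constraints, minimization of $\iu$ over it, a localized deformation combined with a degree computation to show the minimizer is a critical point (the paper's Lemma~\ref{L_six} and the proof of Proposition~\ref{prop}), and the $\mu$-penalty plus the smallness constraints to identify the limit profile (the paper's Lemmas~\ref{L_one}--\ref{L_three}). One step, however, would fail as written: in your Section~3 you ``conclude by weak lower semicontinuity that $u_\mu\in\nmu$.'' Membership in $\nmu$ requires the \emph{equalities} in (${\cal{N}}_{ii}$), and these are not weakly closed: along a minimizing sequence the terms $\int\ap|u_n|^{p-2}u_n\utpn$, etc., pass to the limit by compactness of $\hozo\hookrightarrow L^p$, but the norms $\left\|\utpn\right\|$, $\left\|\utmn\right\|$, $\left\|\uhpn\right\|$ may drop strictly, in which case the weak limit sits strictly inside the constraint and is \emph{not} in $\nmu$. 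The paper excludes this (Lemma~\ref{L_five}) by correcting the weak limit with factors $(\rt,\st,\that)\in[c,1]^3$ chosen to restore (${\cal{N}}_{ii}$), and then invoking the key uniform maximality statement (Lemma~\ref{L_four}: $(1,1,1)$ is an absolute maximum of $\iu$ along the three rescaling directions, with a drop $d_\theta$ independent of $u$ and $\mu$) to get $\iu(w)<\lim\iu(u_n)$, contradicting minimality; only then does equality hold in the lower-semicontinuity inequalities. You do use rescalings and degree in the criticality step, where the paper needs them too, but the same maximality lemma is equally indispensable at the existence-of-minimizer step, and your sketch omits it exactly there.

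Two smaller caveats. First, your claim that the Nehari identities let you rewrite $\iu(u)$ as a positive multiple of the $L^p$ term, so that $\inf_{\nmu}\iu>0$, is not literal: the components $\uhm$, $\ub$, $\bu$ are not Nehari-constrained, and the cross terms through $\bu$ must be controlled; the paper does this via the explicit constraints (${\cal{N}}_{iii}$)--(${\cal{N}}_{v}$) together with Lemma~\ref{L_one} ($\int|\bu|^p$ uniformly small for large $\mu$), and boundedness below (not positivity) of the infimum is what is actually used. Second, your comparison argument for killing the $\ob$- and $\uhm$-components only works because of the a priori bound $\max\{\left\|\uhm\right\|,\left\|\ub\right\|\}\leq\rho_0$ in (${\cal{N}}_{v}$) and the constant $c_\rho$ of (\ref{nine}); without that cap, a large component on $\ob$ \emph{lowers} the energy (the functional is unbounded below on $\hoz(\ob)$), so the phrase ``would raise $c_\mu$ by at least the least-energy Nehari level on $\ob$'' should be understood as the paper's local statement in Lemma~\ref{L_three}(c), not as a comparison with the global least-energy level on $\ob$.
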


The solutions of (\ref{one}) are the critical points of the $C^2$ functional $\iu\::\:
\hozo\to\R$, defined by
$$
\iu(u)=\oh\left\|u\right\|^2-\op\int(\ap-\mu\am)|u|^p.
$$

We fix a function $v$ such that
$v=\vt+\vhp$, with $\vtp$, $\vtm$, $\vh\not\equiv 0$ and
$$\iu^\prime(v)(\vtp)=\iu^\prime(v)(\vtm)=\iu^\prime(v)(\vh)=0$$
for some (and hence all) $\mu>0$.

The restriction of $\iu$ to $\hoz(\ohat)\oplus\hoz(\ob)$ is independent of
$\mu$ and has a strict local minimum at zero. 
We fix a small $\rho_0>0$ such that zero is the unique minimizer of $\iu$
in $\{u\in\hoz(\ohat)\oplus\hoz(\ob):\max\left\{\left\|\uh\right\|,\left\|\ub\right\|\}\leq\rho_0\right\}$.
For $0<\rho\leq\rho_0$, we denote by $c_\rho$ the positive constant
\begin{equation}\label{nine}
c_\rho:=\inf_{\stackrel{u\in\hoz(\ohat)\oplus\hoz(\ob)}{\rho\leq\max\{\left\|\uh\right\|,\left\|\ub\right\|\}\leq\rho_0}}\iu(u).
\end{equation}

The solutions of (\ref{one}) will be obtained by minimizing the functional $\iu$
on the following Nehari-type set, $\nmu$. Let $\rho_0$ be as
above and $R>\left\|v\right\|$.
\begin{Def}\label{def-nmu}
$\nmu$ is the set of functions $u=\ut+\uh+\ub+\bu\in\hozo$ such that
\begin{enumerate}
\item[{\rm (}${\cal{N}}_{i}${\rm )}] $\utp$, $\utm$, $\uhp\not\equiv 0$,
\item[{\rm (}${\cal{N}}_{ii}${\rm )}] $\iu^\prime(u)(\utp)=\iu^\prime(u)(\utm)=\iu^\prime(u)(\uhp)=0$,
\item[{\rm (}${\cal{N}}_{iii}${\rm )}] $\iu(u)\leq\iu(v)+1$,
\item[{\rm (}${\cal{N}}_{iv}${\rm )}] $\left\|\bu\right\|\leq\min\{\left\|\utp\right\|,\left\|\utm\right\|,\left\|\uhp\right\|\}
<\left\|\ut+\uhp\right\|\leq R$,
\item[{\rm (}${\cal{N}}_{v}${\rm )}] $\max\{\left\|\uhm\right\|,\left\|\ub\right\|\}\leq\rho_0$.
\end{enumerate}
\end{Def}
\noindent We remark that $v\in\nmu$ for all $\mu>0$.

The square of the $\hozo$ norm of $u$ is equal to the sum of the
squares of the $\hozo$ norms of the components of $u$, but the $p$-th power of the $L^p(\Omega)$
norm of $u$ does not have such a nice property. 
However, the next lemma says that this is almost the case when $\mu$ is large.
\begin{Lem}\label{L_one}
Let $\delta>0$ be given. There exists $\mu_\delta$ such that, if $\mu>\mu_\delta$,
$$\forall\, {u\in\nmu},\quad\int|\bu|^p<\delta.$$
\end{Lem}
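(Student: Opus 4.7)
The plan is to argue by contradiction. Suppose the statement fails: for some $\delta>0$ there exist $\mu_n\to+\infty$ and $u_n\in\nmun$ with $\int|\bun|^p\geq\delta$ for every $n$. The first step is to produce a uniform $\hozo$ bound on $\{u_n\}$. Since $\utn$ and $\uhpn$ have disjoint essential supports (inside $\ot$ and inside the positivity set of $\uhn$ in $\ohat$, respectively), they are orthogonal in $\hozo$, so $(\mathcal{N}_{iv})$ yields $\|\utn\|,\|\uhpn\|,\|\bun\|\leq R$; combined with $(\mathcal{N}_v)$ and the orthogonal decomposition $\|u_n\|^2=\|\utn\|^2+\|\uhn\|^2+\|\ubn\|^2+\|\bun\|^2$, this bounds $\|u_n\|$ uniformly in $n$.

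Second, I would exploit the energy ceiling in $(\mathcal{N}_{iii})$. Writing
$$\iu(u_n)=\oh\|u_n\|^2-\op\int\ap|u_n|^p+\frac{\mu_n}{p}\int\am|u_n|^p,$$
the previous step together with the Sobolev embedding bounds the first two terms uniformly; and since $v=\vt+\vhp$ vanishes on $\Omega^-$, $\iu(v)$ itself does not depend on $\mu$. Thus $(\mathcal{N}_{iii})$ forces $\mu_n\int\am|u_n|^p\leq C$, so that $\int\am|u_n|^p\to 0$.

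Third, I pass to a subsequence with $u_n\rightharpoonup u$ in $\hozo$. By weak continuity of the orthogonal projections, $\bun\rightharpoonup\bu$, and Rellich compactness gives $\bun\to\bu$ in $L^p(\Omega)$, so $\int|\bu|^p\geq\delta$. Strong $L^p$ convergence of $u_n$ together with the previous bound forces $\int\am|u|^p=0$, hence $u\equiv 0$ a.e.\ on $\Omega^-$; since $\utn$, $\uhn$, $\ubn$ all vanish on $\Omega^-$ by construction, so does $\bu$. Passing to the weak limit in the orthogonality $\langle\bun,\varphi\rangle=0$ for every $\varphi\in\hoz(\ot)\oplus\hoz(\ohat)\oplus\hoz(\ob)$ shows that $\bu$ is also orthogonal to this subspace, so $\bu$ is harmonic in each of $\ot$, $\ohat$, $\ob$.

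The main obstacle is the final step: concluding $\bu\equiv 0$. The three components of $\Omega^+$ have pairwise disjoint closures inside $\Omega^+$, so $\partial\ot\cap\Omega\subset\partial\Omega^+\cap\Omega\subset\overline{\Omega^-}$ (using the assumed Lipschitz regularity of the components, so that $\mathrm{int}(\overline{\Omega^+})=\Omega^+$). Hence the trace of $\bu|_{\ot}$ on $\partial\ot$ vanishes: on $\partial\ot\cap\partial\Omega$ because $\bu\in\hozo$, and on $\partial\ot\cap\Omega$ because $\bu=0$ a.e.\ on the adjacent open set $\Omega^-$. By uniqueness of the Dirichlet problem for the Laplacian, $\bu|_{\ot}\equiv 0$; the same argument on $\ohat$ and $\ob$ shows $\bu\equiv 0$ on $\Omega^+$, so $\bu\equiv 0$ on $\Omega$. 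This contradicts $\int|\bu|^p\geq\delta$ and proves the lemma.
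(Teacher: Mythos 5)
Your proposal is correct and follows essentially the same strategy as the paper: argue by contradiction with $\mu_n\to+\infty$, use the energy bound $(\mathcal{N}_{iii})$ to force $\int\am|u_n|^p\to 0$, pass to the weak limit so that $\bu$ vanishes on $\Omega^-$ and is harmonic in $\ot\cup\ohat\cup\ob$, and conclude $\bu\equiv 0$, contradicting $\int|\bu|^p\geq\delta$ via the compact embedding. Your only deviation is cosmetic: you finish with a trace and Dirichlet-uniqueness argument on each component, whereas the paper notes that $\bu$ lies in $\hoz(\ot)\oplus\hoz(\ohat)\oplus\hoz(\ob)$ while being orthogonal to that subspace (harmonicity), which is the same content, and you also usefully spell out the boundedness of $\|u_n\|$ and the strong $L^p$ convergence that the paper leaves implicit.
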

\begin{proof}
Suppose, by contradiction, that for some $\delta>0$ there exists $\mu_n\to+\infty$ and 
$u_n\in{\cal{N}}_{\mu_n}$ with
\begin{equation}
\label{five}
\int|\bun|^p\geq\delta.
\end{equation}
As $\left\|u_n\right\|$ is bounded, we may suppose $u_n\rightharpoonup u$.
We have $\bun\rightharpoonup \bu$ and $\bu\equiv 0$ in $\Omega\setminus
(\ot\cup\ohat\cup\ob)$. Otherwise, by (\ref{fourteen}) and modulo a subsequence,
$$
\int\am|\bun|^p\geq c>0.
$$
This would contradict (${\cal{N}}_{iii}$) for sufficiently large $n$:
$$
\oh\left\|u_n\right\|^2-\op\int\ap|u_n|^p+\frac{\mu_n}{p}\int\am|\bun|^p\leq \iu(v)+1.
$$
So the function $\bu$ belongs to $\hoz(\ot)\oplus\hoz(\ohat)\oplus
\hoz(\ob)$ and is harmonic in $\ot\cup\ohat\cup\ob$. It follows that
$\bu$ must be identically equal to zero in $\Omega$. This 
contradicts (\ref{five}).
\end{proof}
Usually one may obtain a lower bound for the $\hozo$ norm of $\utp$, $\utm$ and $\uhp$ from
(${\cal{N}}_{i}$) and a condition like (${\cal{N}}_{ii}$). 
Here, in addition, we require the first inequality in (${\cal{N}}_{iv}$) to prove
\begin{Lem}\label{L_two}
There exists a constant $\kappa$, independent of $\mu$, such that
\begin{equation}
\label{six}
\forall\,{u\in\nmu},\quad\min\left\{\left\|\utp\right\|,\left\|\utm\right\|,\left\|\uhp\right\|\right\}\geq\kappa>0.
\end{equation}
\end{Lem}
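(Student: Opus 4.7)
The plan is to derive the lower bound directly from the three Nehari-type identities encoded in $(\mathcal{N}_{ii})$, taking advantage of the fact that $\utp$, $\utm$, $\uhp$ are all supported in $\overline{\ot}\cup\overline{\ohat}$, where $\am$ vanishes by (\ref{fourteen}); this is precisely what keeps the right-hand side independent of $\mu$.

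First I would unfold $\iu'(u)(\utp)=\langle u,\utp\rangle-\int(\ap-\mu\am)|u|^{p-2}u\,\utp=0$. Since $\utp\in\hoz(\ot)$, the projection identity defining $\ut$ gives $\langle u,\utp\rangle=\langle\ut,\utp\rangle$; and because $\nabla\utp\cdot\nabla\utm=0$ almost everywhere, this equals $\|\utp\|^2$. On the right-hand side, $\mathrm{supp}\,\utp\subset\overline{\ot}$, where $\am\equiv 0$, where $\uh=\ub=0$, and where $\utm=0$ on the further subset $\{\utp>0\}$; hence $u=\utp+\bu$ on $\{\utp>0\}$ and so
\[
\|\utp\|^2=\int_{\{\utp>0\}}\ap\,|u|^{p-2}u\,\utp\leq\|\ap\|_\infty\int_\Omega(\utp+|\bu|)^{p-1}\utp.
\]
Hölder's inequality combined with the Sobolev embedding $\hozo\hookrightarrow L^p(\Omega)$ (available since $p<2^*$) then produces
\[
\|\utp\|^2\leq C\,(\|\utp\|+\|\bu\|)^{p-1}\|\utp\|,
\]
with $C$ depending only on $\|\ap\|_\infty$, $p$, and $\Omega$.

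Condition $(\mathcal{N}_{iv})$ now yields $\|\bu\|\leq\|\utp\|$, hence $\|\utp\|^2\leq C\,2^{p-1}\|\utp\|^p$; since $(\mathcal{N}_{i})$ rules out $\utp\equiv 0$, this gives $\|\utp\|\geq(C\,2^{p-1})^{-1/(p-2)}=:\kappa_1$, a constant independent of $\mu$ and of $u\in\nmu$. The identical argument with $\utm$ in place of $\utp$ (still supported in $\overline{\ot}$, with $\|\bu\|\leq\|\utm\|$) and with $\uhp$ in place of $\utp$ (supported in $\overline{\ohat}$, where $\am\equiv 0$ and $u=\uhp+\bu$ on $\{\uhp>0\}$, with $\|\bu\|\leq\|\uhp\|$) produces analogous constants $\kappa_2$ and $\kappa_3$, and $\kappa:=\min\{\kappa_1,\kappa_2,\kappa_3\}$ is the desired lower bound. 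The only mild obstacle is keeping track of the harmonic cross-term $\bu$ when expanding $|u|^{p-2}u\,\utp$; the first inequality in $(\mathcal{N}_{iv})$ is precisely the ingredient that absorbs it without reintroducing any $\mu$-dependence.
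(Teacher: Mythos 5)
Your proof is correct and follows essentially the same route as the paper: test $\iu'(u)$ against $\utp$, $-\utm$, $\uhp$, note that on the support of each of these $u$ reduces to $\pm$(that part)$+\bu$ with $\am\equiv 0$ there, apply H\"older and Sobolev, and absorb $\|\bu\|$ via the first inequality in $(\mathcal{N}_{iv})$ before dividing by $\|w\|^p$ using $(\mathcal{N}_{i})$. You merely spell out more explicitly the pointwise identification $u=\utp+\bu$ on $\{\utp>0\}$, which the paper leaves implicit in its characteristic-function estimate.
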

\begin{proof}
Let $w$ be one of the three functions $\utp$, $-\utm$ or $\uhp$. 
Denote by $\chi$ the characteristic function of the set $\{x\in\Omega\::\: w(x)\neq 0\}$
and let $c$ be the Sobolev constant $\left(\int|v|^p\right)^{1/p}\leq c\left\|v\right\|$,
$\forall\,{v\in\hozo}$.
From $\iu^\prime(u)w=0$,
\begin{eqnarray*}
\left\|w\right\|^2&=&\int\ap|u|^{p-2}uw\leq\left\|a\right\|_\infty\left(\int{\chi}
|u|^p\right)^{\frac{p-1}{p}}\left(\int|w|^p\right)^{\frac{1}{p}}\\
&\leq&\left\|a\right\|_\infty c^p\left(\left\|\bu\right\|+\left\|w\right\|\right)^{p-1}\left\|w\right\|
\leq 2^{p-1}\left\|a\right\|_\infty c^p\left\|w\right\|^p,
\end{eqnarray*}
because of the first inequality in (${\cal{N}}_{iv}$). Since $w\not\equiv 0$, due to (${\cal{N}}_{i}$), we may take 
$$\kappa=\left(
2^{p-1}\left\|a\right\|_\infty c^p \right)^{-1/(p-2)}.$$
\end{proof}
Now we fix a $\mu$ and turn to minimizing sequences $(u_n)$ for $\iu$ restricted to $\nmu$.
Later it will be important that the limit of such a sequence has a neighborhood
whose points satisfy (${\cal{N}}_{i}$), (${\cal{N}}_{iii}$), (${\cal{N}}_{iv}$) and (${\cal{N}}_{v}$).
This follows from
\begin{Lem}\label{L_three}
Let $\overline{R}$ be fixed, $\left\|v\right\|<\overline{R}<R$, and $\delta$ be given, $0<\delta<\rho_0$.
There exists $\mu_\delta>0$ such that for every $\mu>\mu_\delta$ and every minimizing sequence $(u_n)$ for
$\iu$ restricted to $\nmu$, we have, for large $n$,
\begin{enumerate}
\item[{\rm (a)}] $\iu(u_n)\leq\iu(v)+\oh$,
\item[{\rm (b)}] $\left\|\utn+\uhpn\right\|<\overline{R}$,
\item[{\rm (c)}] $\max\{\left\|\uhmn\right\|,\left\|\ubn\right\|\}<\delta$,
\item[{\rm (d)}] $\left\|\bun\right\|<\delta$;
\end{enumerate}
also
\begin{enumerate}
\item[{\rm (e)}] $\up\int\am|\bun|^p<\delta$.
\end{enumerate}
\end{Lem}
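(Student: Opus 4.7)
The plan is to establish items (a)--(e) in sequence, combining the minimizing property of $(u_n)$, the orthogonal decomposition of $\hozo$, and Lemma~\ref{L_one}.

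Item (a) is immediate: since $v=\vt+\vhp$ vanishes on $\Omega^-$, $\iu(v)$ does not depend on $\mu$, and $v\in\nmu$ gives $\inf_\nmu\iu\leq\iu(v)$. The hypothesis $\iu(u_n)\to\inf_\nmu\iu$ then yields (a) for every large $n$.

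For items (d) and (e) I would argue by contradiction using Lemma~\ref{L_one}. Assuming the failure occurs along some $\mu_k\to+\infty$ and minimizing sequences $(u^k_n)_n\subset\mathcal{N}_{\mu_k}$, I extract via diagonal selection $u_k:=u^k_{n(k)}$ along which (d) or (e) fails; the uniform bound from $(\mathcal{N}_{iv})$--$(\mathcal{N}_{v})$ on $\|u_k\|$ lets me pass to a weakly convergent subsequence $u_k\rightharpoonup u_*$ in $\hozo$, with strong $L^p(\Omega)$ convergence. Lemma~\ref{L_one} then forces $\underline{u}_*\equiv 0$. Writing
$$\iu(u_k)\;=\;\oh\|u_k\|^2-\op\int a^+|u_k|^p+\frac{\mu_k}{p}\int a^-|\bun|^p,$$
item (a), weak lower semicontinuity of the quadratic part, and continuity of the $L^p$-term under compact embedding together give an upper bound on $\frac{\mu_k}{p}\int a^-|\bun|^p$ which I would push to $o(1)$ by comparing with the value at $v$ and exploiting that the limiting infimum $\lim_{\mu\to\infty}\inf_\nmu\iu$ coincides with the infimum of $\oh\|u\|^2-\op\int a^+|u|^p$ over a natural ``limit Nehari set'' of functions vanishing on $\Omega^-$; this yields (e). For (d), the $L^p$-smallness of $\bun$ is upgraded to $H^1$-smallness using that $\bun$ is harmonic on $\ot\cup\ohat\cup\ob$: a Caccioppoli-style estimate combined with trace bounds across $\partial(\ot\cup\ohat\cup\ob)$ transfers $L^p$-smallness on $\Omega^-$, where $\bun=u_n$, to $H^1$-smallness on each harmonic component, yielding $\|\bun\|\to 0$.

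For items (b) and (c) I would use explicit competitor constructions. If (c) fails, say $\|\uhmn\|\geq\delta$, build $w_n$ by setting $\uhmn$ and $\ubn$ to zero, then apply a small rescaling of $\utpn,\utmn,\uhpn$ to restore the three Nehari conditions $(\mathcal{N}_{ii})$ while remaining in $\nmu$; the strictly positive constant $c_\delta$ from (\ref{nine}) then shows $\iu(w_n)<\iu(u_n)-c_\delta/2$ for large $n$, contradicting the minimizing property (the case $\|\ubn\|\geq\delta$ is analogous). For (b), assuming $\|\utn+\uhpn\|\geq\overline R$ along a subsequence, I exploit the strict concavity of $t\mapsto\iu(t\utn+t\uhpn)$ past its Nehari-critical value (pinned at $t=1$ by $(\mathcal{N}_{ii})$) to produce a rescaled competitor with norm $\|v\|<\overline R$ and $\iu$-value no larger, again contradicting minimization.

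The main obstacle is the refined asymptotic estimate in (e), which needs $\frac{\mu_k}{p}\int a^-|\bun|^p=o(1)$ rather than merely a uniform bound: this hinges on correctly identifying the limiting infimum of $\iu$ on $\nmu$ as $\mu\to+\infty$. The harmonic-to-$H^1$ upgrade in (d) is also non-trivial but standard given the Lipschitz regularity of the component boundaries.
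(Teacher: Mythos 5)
There is a genuine gap, and it is concentrated in items (d) and (e). The paper proves (d) and (e) with a single competitor construction that your proposal never produces: assuming $\left\|\bun\right\|\geq\delta$, one sets $v_n=\rtn\utpn-\stn\utmn+\thn\uhpn-\uhmn+\ubn$ (i.e.\ one \emph{drops} $\bun$ and rescales the three Nehari components by factors $\rtn,\stn,\thn=1+o(1)$ coming from $({\cal N}_{ii})$ and Lemma~\ref{L_one}); items (a)--(c) and Lemma~\ref{L_two} show $v_n\in\nmu$, and the splitting $\iu(u_n)-\iu(u_n-\bun)=\oh\left\|\bun\right\|^2-\op\int\ap(|u_n|^p-|u_n-\bun|^p)+\up\int\am|\bun|^p$ gives $\iu(v_n)\leq\iu(u_n)-\oh\delta^2+o(1)$, contradicting minimality; (e) then drops out of the same inequality because the penalization term $\up\int\am|\bun|^p$ sits inside it. Your substitutes do not work. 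For (d), the ``Caccioppoli plus trace'' upgrade from $L^p$-smallness to $H^1$-smallness is not available: $\bun$ is harmonic only inside $\ot\cup\ohat\cup\ob$ and satisfies \emph{no} equation on $\Omega\setminus\overline{(\ot\cup\ohat\cup\ob)}$, where the bulk of $\left\|\bun\right\|$ may live; smallness of $\int|\bun|^p$ does not control $\left\|\nabla\bun\right\|_{L^2}$ there, nor does it control the $H^{1/2}$ boundary trace needed to bound the harmonic part. In fact (d) is \emph{false} for general elements of $\nmu$ with $\mu$ large (a spike supported in $\Omega^-$ with tiny $L^p$ norm and $\left\|\bu\right\|\sim\delta_0$ is compatible with $({\cal N}_{i})$--$({\cal N}_{v})$), so any proof must use the minimizing property through an energy comparison, which your (d) argument does not. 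For (e), the step ``the limiting infimum of $\iu$ on $\nmu$ coincides with the infimum over a limit Nehari set'' is an unproven (and at this stage unavailable) assertion, essentially as strong as what is being proved; bounding $\frac{\mu_k}{p}\int\am|\bun|^p$ by $\iu(u_n)$ minus the unpenalized energy only becomes $<\delta$ once you know the unpenalized energy is not below $\inf_\nmu\iu-o(1)$, and that is exactly what the competitor $v_n$ delivers.

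Item (b) is also shaky as written: rescaling $t(\utn+\uhpn)$ destroys $({\cal N}_{ii})$, so the ``rescaled competitor with norm $\left\|v\right\|$ and $\iu$-value no larger'' is not in $\nmu$ and produces no contradiction with minimization over $\nmu$. The paper's route is more direct: using $({\cal N}_{ii})$, $({\cal N}_{iv})$, $({\cal N}_{v})$ and Lemma~\ref{L_one} one gets $\iu(u_n)\geq\left(\oh-\op\right)\left\|\utn+\uhpn\right\|^2+o(1)\geq\left(\oh-\op\right)\overline{R}^2+o(1)$, which exceeds $\iu(v)=\left(\oh-\op\right)\left\|v\right\|^2\geq\inf_\nmu\iu$ by a fixed amount, contradicting minimality; no rescaled function is needed. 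Your (a) agrees with the paper, and your (c) is essentially the paper's argument (note only that removing $\uhmn$ or $\ubn$ does not perturb $({\cal N}_{ii})$ at all, since those conditions only involve integrals over $\ot$ and ${\rm supp}\,\uhpn\subset\ohat$, so no rescaling is needed there; the paper also treats $\uhmn$ and $\ubn$ one at a time via the competitors $u_n+\uhmn$ and $u_n-\ubn$).
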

\begin{proof}
{(a)}\/ Immediate since $(u_n)$ is minimizing and $v\in\nmu$ for all $\mu$.\\
{(b)}\/ Suppose 
\begin{equation}\label{eight}
\left\|\utn+\uhpn\right\|\geq\overline{R}
\end{equation}
for large $n$.
\begin{eqnarray*}
\iu(u_n)&=&\oh\left\|\utn+\uhpn\right\|^2+\oh\left\|\uhmn\right\|^2+\oh\left\|\ubn\right\|^2
+\oh\left\|\bun\right\|^2\\
&&-\op\int\ap|u_n|^{p-2}u_n(\utn+\uhpn)+\op\int\ap|u_n|^{p-2}u_n\uhmn\\
&&-\op\int\ap|u_n|^{p-2}u_n\ubn-\op\int\ap|u_n|^{p-2}u_n\bun+
\up\int\am|\bun|^p\\
&\geq&\left(\oh-\op\right)\left\|\utn+\uhpn\right\|^2+o(1).
\end{eqnarray*}
Here and henceforth $o(1)$ denotes a value, independent of $u\in\nmu$, that can be made arbitrarily small 
by choosing $\mu$
sufficiently large. For the proof of the last inequality we used (${\cal{N}}_{ii}$),
\begin{eqnarray*}
\oh\left\|\uhmn\right\|^2+\op\int\ap|u_n|^{p-2}u_n\uhmn&\geq&o(1)\\
\noalign{\noindent and}
\oh\left\|\ubn\right\|^2-\op\int\ap|u_n|^{p-2}u_n\ubn&\geq&o(1)
\end{eqnarray*}
(consequences of (${\cal{N}}_{v}$) and Lemma~\ref{L_one}),
$$
-\op\int\ap|u_n|^{p-2}u_n\bun=o(1)\\
$$
(consequence of (${\cal{N}}_{iv}$), (${\cal{N}}_{v}$) and Lemma~\ref{L_one}),
and 
$$
\oh\left\|\bun\right\|^2+\up\int\am|\bun|^p\geq 0.
$$
We now use (\ref{eight}) and the definition of $\overline{R}$. For sufficiently large $\mu$,
$$
\iu(u_n)\geq\left(\oh-\op\right)\overline{R}^2+o(1)>\left(\oh-\op\right)
\left\|v\right\|^2+c=\iu(v)+c,
$$
for some $c>0$.
This contradicts the fact that $(u_n)$ is minimizing.\\
{(c)}\/ Suppose $\left\|\uhmn\right\|\geq\delta$ for large $n$. As in (b), we have
\begin{eqnarray*}
\iu(u_n)
&=&\iu(u_n+\uhmn)+\oh\left\|\uhmn\right\|^2-\op\int\am|\uhmn|^p+o(1)\\
&\geq&\iu(u_n+\uhmn)+c_\delta+o(1),
\end{eqnarray*}
due to Lemma~\ref{L_one} and then (\ref{nine}).
This implies that 
$$\lim\iu(u_n)>\liminf\iu(u_n+\uhmn),$$ 
for sufficiently large $\mu$,
and contradicts the assumption that $(u_n)$ is minimizing,
because $u_n+\uhmn\in\nmu$. Similarly, one proves that $\left\|\ubn\right\|\geq\delta$ for large
$n$ leads to a contradiction, for sufficiently large $\mu$,
because $u_n-\ubn\in\nmu$.\\
{(d)}\/ Suppose $\left\|\bun\right\|\geq\delta$ for large $n$. From (${\cal{N}}_{ii}$) and Lemma~\ref{L_one},
we know
\begin{eqnarray*}
\left\|\utpn\right\|^2&=&\int\ap|\utpn|^p+o(1),\\
\left\|\utmn\right\|^2&=&\int\ap|\utmn|^p+o(1),\\
\left\|\uhpn\right\|^2&=&\int\ap|\uhpn|^p+o(1).
\end{eqnarray*}
We define $\rtn$, $\stn$ and $\thn$ by
$$
\rtn=\left(\frac{\left\|\utpn\right\|^2}{\int\ap|\utpn|^p}\right)^{\frac{1}{p-2}},\ 
\stn=\left(\frac{\left\|\utmn\right\|^2}{\int\ap|\utmn|^p}\right)^{\frac{1}{p-2}},\ 
\thn=\left(\frac{\left\|\uhpn\right\|^2}{\int\ap|\uhpn|^p}\right)^{\frac{1}{p-2}},
$$
so that $\rtn$, $\stn$, $\thn=1+o(1)$ by Lemma~\ref{L_two}, and
$$
v_n\: :=\: \rtn\utpn-\stn\utmn+\thn\uhpn-\uhmn+\ubn.
$$
Provided $\mu$ is large,
we can guarantee $v_n\in\nmu$ for large $n$ due to {(a), (b), (c)}\/ and 
Lemma~\ref{L_two}. We now obtain an upper bound for $\iu(v_n)$:
\begin{eqnarray}
\iu(v_n)&=&\iu(\utn+\uhn+\ubn)+o(1)\nonumber\\
&\leq&\iu(u_n)+o(1)\nonumber\\ &&-\left(\oh\left\|\bun\right\|^2-\op\int\ap\left(|u_n|^p
-|u_n-\bun|^p\right)
+\up\int\am|\bun|^p\right)\ \ \label{ten}\\
&\leq&\iu(u_n)+o(1)-\oh\left\|\bun\right\|^2\nonumber\\
&\leq&\iu(u_n)+o(1)-\oh\delta^2.\nonumber
\end{eqnarray}
This implies that $\liminf\iu(v_n)<\lim\iu(u_n)$
for sufficiently large $\mu$,
which is impossible.\\
{(e)}\/ Follows from inequality (\ref{ten}).
\end{proof}
\section{Existence of a minimizer in $\nmu$}

For each $u\in\nmu$, we consider the 3-dimensional manifold with boundary
in $\hozo$ parametrized on $[0,2]^3$ by
\begin{equation}\label{twelve}
\varsigma(\rt,\st,\that)=\rt\utp-\st\utm+\that\uhp-\uhm+\ub+\bu.
\end{equation}
We call $f$ the function $\iu\circ\varsigma$, so that 
\begin{eqnarray*}
f(\rt,\st,\that)&=&\frac{\rt^2}{2}\left\|\utp\right\|^2+
\frac{\st^2}{2}\left\|\utm\right\|^2+\frac{\that^2}{2}\left\|\uhp\right\|^2+K\\
&&-\op\int\ap|\rt\utp+\bu|^p-\op\int\ap|\bu-\st\utm|^p-\op\int\ap|\that\uhp+\bu|^p,
\end{eqnarray*}
with
\begin{eqnarray*}
K&=&\oh\left\|\uhm\right\|^2+\oh\left\|\ub\right\|^2+\oh\left\|\bu\right\|^2\\
&&-\op\int\ap|\bu-\uhm|^p-\op\int\ap|\ub+\bu|^p+\up\int\am|\bu|^p.
\end{eqnarray*}
Two properties of $f$ are immediate, namely
$f(1,1,1)=\iu(u)$ and $\nabla f(1,1,1)=0$ by (${\cal{N}}_{ii}$).
The critical point $(1,1,1)$ is characterized in 
\begin{Lem}\label{L_four}
For $\mu$ sufficiently large, independent of $u\in\nmu$, the point $(1,1,1)$
is an absolute maximum of $f$. Furthermore, if 
$$
|(\rt,\st,\that)-(1,1,1)|\geq\theta>0,
$$
then
\begin{equation}
\label{eleven}
f(\rt,\st,\that)\leq f(1,1,1)-d_\theta.
\end{equation}
The constant $d_\theta>0$ may be chosen independent of $u$ and $\mu$.
\end{Lem}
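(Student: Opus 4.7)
The key structural observation is that $f$ separates additively in its three arguments: writing $K$ for the piece independent of $(\rt,\st,\that)$,
$$f(\rt,\st,\that)=g_1(\rt)+g_2(\st)+g_3(\that)+K,$$
where
$$g_1(\lambda)=\tfrac{\lambda^2}{2}\|\utp\|^2-\tfrac{1}{p}\int\ap|\lambda\utp+\bu|^p,$$
and $g_2,g_3$ are defined analogously with $\utp$ replaced by $-\utm$ and by $\uhp$ (and the same $\bu$-translate). Thus it suffices to prove, uniformly in $u\in\nmu$ for $\mu$ large, that each $g_i$ attains its absolute maximum on $[0,2]$ uniquely at $\lambda=1$, with a quantitative gap $g_i(1)-g_i(\lambda)\ge d_\theta^{(i)}>0$ whenever $|\lambda-1|\ge\theta$. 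The 3D assertion then follows by a pigeonhole: if $|(\rt,\st,\that)-(1,1,1)|\ge\theta$, at least one coordinate is at distance $\ge\theta/\sqrt{3}$ from $1$, and the other two $g_i$'s contribute non-positively to $f(1,1,1)-f(\rt,\st,\that)$.

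I focus on $g_1$; the arguments for $g_2,g_3$ are identical. By $(\mathcal{N}_{ii})$, $g_1'(1)=0$. Split $g_1=g_1^0+E$ with clean part
$$g_1^0(\lambda):=\tfrac{A\lambda^2}{2}-\tfrac{B\lambda^p}{p},\qquad A:=\|\utp\|^2,\ B:=\int\ap|\utp|^p,$$
and perturbation $E(\lambda):=-\tfrac{1}{p}\int\ap\bigl(|\lambda\utp+\bu|^p-|\lambda\utp|^p\bigr)$. The pointwise estimate $\bigl||x+y|^p-|x|^p\bigr|\le p(|x|+|y|)^{p-1}|y|$, H\"older's inequality, the bound $\|\utp\|\le R$ from $(\mathcal{N}_{iv})$, and $\int|\bu|^p\to 0$ (Lemma~\ref{L_one}) yield $\sup_{\lambda\in[0,2]}|E(\lambda)|=o(1)$ as $\mu\to\infty$, uniformly in $u$; comparing $g_1'(1)=0$ with the explicit form of $(g_1^0)'(1)=A-B$ gives $|A-B|=o(1)$ as well. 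The clean function factors as $g_1^0(\lambda)=A\,\phi_\gamma(\lambda)$ with $\gamma:=B/A=1+o(1)$ and $\phi_\gamma(\lambda):=\lambda^2/2-\gamma\lambda^p/p$; this has a unique maximum on $[0,2]$ at $\gamma^{-1/(p-2)}=1+o(1)$, and by compactness $\phi_\gamma(1)-\phi_\gamma(\lambda)\ge c_\theta>0$ for every $\lambda\in[0,2]$ with $|\lambda-1|\ge\theta$, with $c_\theta$ depending only on $\theta$, $p$ and the allowed (compact) range of $\gamma$. Since $A\ge\kappa^2$ by Lemma~\ref{L_two}, the clean gap satisfies $g_1^0(1)-g_1^0(\lambda)\ge c_\theta\kappa^2$, which absorbs the $o(1)$ perturbation coming from $E$ once $\mu$ is large.

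\textbf{Main obstacle.} The subtle point is to keep the constant $d_\theta$ independent of both $u$ and $\mu$. Lemma~\ref{L_one} makes the $\bu$-perturbation uniformly small across all $u\in\nmu$, and Lemma~\ref{L_two} keeps the clean problem nondegenerate via $\|\utp\|,\|\utm\|,\|\uhp\|\ge\kappa$. After enlarging $\mu$ so that the $o(1)$ quantities (both the $E$-bound and $|A-B|$) sit below $c_\theta\kappa^2/2$, the choice $d_\theta:=c_\theta\kappa^2/2$ depends only on $\theta$ (and on $p$, $\|a\|_\infty$, $\kappa$), completing the quantitative estimate. The absolute-maximum assertion is then immediate from the same argument, since $\phi_\gamma(1)\ge\phi_\gamma(\lambda)$ on $[0,2]$ with a uniform positive gap outside any neighborhood of $1$, while the local quadratic behavior $g_i''(1)\le -(p-2)\kappa^2/2+o(1)<0$ (computed from $(\mathcal{N}_{ii})$ together with Lemma~\ref{L_one}) handles points near $\lambda=1$.
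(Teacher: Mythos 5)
Your proposal is correct and is essentially the paper's own argument in lightly different clothing: the paper compares $f$ on $[0,2]^3$ with the separable model $g(\rt,\st,\that)=\sum\bigl(\tfrac{\lambda^2}{2}-\tfrac{\lambda^p}{p}\bigr)\|\cdot\|^2+K$, using $(\mathcal{N}_{ii})$, Lemma~\ref{L_one} and Lemma~\ref{L_two} for uniform $C^0$/$C^2$ closeness and uniform nondegeneracy, exactly as you do with your one-dimensional clean functions $A\phi_\gamma$ plus the pigeonhole. One wording slip: in your pigeonhole step the other two coordinates contribute \emph{non-negatively} (not non-positively) to $f(1,1,1)-f(\rt,\st,\that)$, which is precisely what your claim that each $g_i$ is maximized at $\lambda=1$ delivers.
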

\begin{proof}
We define an auxiliary function $g:\left[0,2\right]^3\to\R$  by
\begin{eqnarray*}
g(\rt,\st,\that)&:=&\left(\frac{\rt^2}{2}-\frac{\rt^p}{p}\right)\left\|\utp\right\|^2
+\left(\frac{\st^2}{2}-\frac{\st^p}{p}\right)\left\|\utm\right\|^2\\ 
&&+\left(\frac{\that^2}{2}-\frac{\that^p}{p}\right)\left\|\uhp\right\|^2+K,
\end{eqnarray*}
which satisfies $\nabla g(1,1,1)=0$ and $$D^2g(1,1,1)=-(p-2)\,\mbox{diag}\left\{\left\|\utp\right\|^2,
\left\|\utm\right\|^2,\left\|\uhp\right\|^2\right\}\leq -(p-2)\kappa I,$$
where $\kappa$ was defined in Lemma~\ref{L_two}. 
One easily checks that in a small neighborhood of $(1,1,1)$ the second
derivative $D^2 g$ is below a negative definite matrix
which is independent of $u\in\nmu$.
We also have that, for any derivative $D^\alpha$ with $|\alpha|\leq 2$,
\begin{equation}
\label{twenty}
|D^\alpha f-D^\alpha g|=o(1),
\end{equation}
by Lemma~\ref{L_one}; notice that the right-hand-side is uniform in $u$ and $\mu$.
Thus, by (\ref{twenty}) with $|\alpha|=2$, $f$ has a strict local maximum at $(1,1,1)$.
We take $\alpha=0$ to conclude this maximum is absolute. Of course, the previous two statements
hold provided $\mu$ is sufficiently large.
\end{proof}

Let $\mu$ be fixed and $(u_n)$ be a minimizing sequence for $\iu$ restricted to $\nmu$.
Since $\nmu$ is bounded in $\hozo$, we may assume 
$$u_n\rightharpoonup u\quad \mbox{in}\ \hozo.$$
\begin{Lem}\label{L_five} If $\mu$ is sufficiently large,
the function $u$ belongs to $\nmu$. Therefore (by the lower semi-continuity of the norm) 
the function $u$ is a minimizer of 
$\iu$ restricted to $\nmu$.
\end{Lem}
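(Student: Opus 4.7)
\begin{altproof}{Lemma~\ref{L_five}}
The plan is to extract a weakly convergent subsequence of $(u_n)$, verify the five conditions defining $\nmu$ for the weak limit $u$, and then conclude from weak lower semi-continuity of $\iu$. The delicate step is (${\cal N}_{ii}$); I would attack it by a three-dimensional Nehari rescaling combined with the gap estimate of Lemma~\ref{L_four}.

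By Lemma~\ref{L_three}(b) the sequence is bounded, so up to a subsequence $u_n\rightharpoonup u$ in $\hozo$. Since the orthogonal projections are weakly continuous, the components $\utn,\uhn,\ubn,\bun$ converge weakly to $\ut,\uh,\ub,\bu$, and the compact Sobolev embedding gives strong $L^p$-convergence of each component together with its positive and negative parts. Conditions (${\cal N}_{iii}$) and (${\cal N}_{v}$) follow at once from weak lower semi-continuity combined with Lemma~\ref{L_three}(a) and (c). For (${\cal N}_i$) I would start from $\iu'(u_n)(\utpn)=0$ and use the orthogonality $\langle\bun,\utpn\rangle=0$ given by the decomposition to write
$$
\|\utpn\|^2=\int\ap|u_n|^{p-2}u_n\utpn\longrightarrow\int\ap|u|^{p-2}u\utp;
$$
by Lemma~\ref{L_two} this limit is at least $\kappa^2>0$, which forces $\utp\not\equiv 0$, and the same argument handles $\utm$ and $\uhp$.

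The heart of the proof is (${\cal N}_{ii}$) together with the strict inequalities of (${\cal N}_{iv}$), which jointly amount to strong $\hozo$-convergence of $\utpn,\utmn,\uhpn$. I would construct unique $\hat r,\hat s,\hat t\in(0,2)$ such that
$$
\tilde u:=\hat r\utp-\hat s\utm+\hat t\uhp-\uhm+\ub+\bu
$$
satisfies the three Nehari equalities of (${\cal N}_{ii}$): weak lower semi-continuity of the norm gives $\|\utp\|^2\leq\int\ap|u|^{p-2}u\utp$, so the scalar equation $r\|\utp\|^2=\int\ap|r\utp+\bu|^{p-2}(r\utp+\bu)\utp$ has a unique positive solution $\hat r\leq 1+o(1)$, and similarly for $\hat s,\hat t$; the a priori estimates of Lemmas~\ref{L_one}--\ref{L_three} then show that the remaining conditions (${\cal N}_i$), (${\cal N}_{iii}$)--(${\cal N}_{v}$) pass to $\tilde u$, so $\tilde u\in\nmu$ and $\iu(\tilde u)\geq\inf_\nmu\iu$. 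In the other direction, Lemma~\ref{L_four} applied to $u_n\in\nmu$ yields on $[0,2]^3$
$$
\iu(r\utpn-s\utmn+t\uhpn-\uhmn+\ubn+\bun)\leq\iu(u_n),
$$
strengthened to $\iu(u_n)-d_\theta$ whenever $|(r,s,t)-(1,1,1)|\geq\theta$. Setting $f(r,s,t):=\iu(r\utp-s\utm+t\uhp-\uhm+\ub+\bu)$ and taking the liminf by weak lower semi-continuity and strong $L^p$-convergence gives $f\leq\inf_\nmu\iu$ everywhere on $[0,2]^3$, with $f\leq\inf_\nmu\iu-d_\theta$ outside any fixed ball around $(1,1,1)$. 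Evaluating at $(\hat r,\hat s,\hat t)$ first forces $\iu(\tilde u)=\inf_\nmu\iu$, and the gap then forces $(\hat r,\hat s,\hat t)=(1,1,1)$, whence $\tilde u=u\in\nmu$. The minimization conclusion is then immediate from $\iu(u)\leq\liminf\iu(u_n)=\inf_\nmu\iu$.

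The main obstacle I anticipate is precisely the verification that the rescaled $\tilde u$ lies in $\nmu$: the Nehari equations for $\hat r,\hat s,\hat t$ are implicit, so showing that they have unique solutions in $(0,2)$ and that the resulting $\tilde u$ still meets the size requirements of (${\cal N}_{iv}$) and (${\cal N}_{v}$) will require a careful combination of the smallness of $\bu$ from Lemma~\ref{L_one}, the lower bound of Lemma~\ref{L_two}, and the $\mu$-uniform control provided by Lemma~\ref{L_three}, together with weak lower semi-continuity of the $\hozo$-norm.
\end{altproof}
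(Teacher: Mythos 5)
Your proposal is correct and follows essentially the paper's own argument: both take the weak limit, verify (${\cal{N}}_{i}$), (${\cal{N}}_{iii}$)--(${\cal{N}}_{v}$) via strong $L^p$ convergence and lower semi-continuity, produce a rescaled competitor $\hat r\,\utp-\hat s\,\utm+\hat t\,\uhp-\uhm+\ub+\bu$ satisfying the Nehari conditions, check it lies in $\nmu$, and use Lemma~\ref{L_four} together with weak lower semi-continuity to force $(\hat r,\hat s,\hat t)=(1,1,1)$. The only cosmetic differences are that the paper argues by contradiction from a strict inequality in the lower semi-continuity estimates, using the explicit scaling factors with exponent $1/(p-2)$, whereas you solve the scalar Nehari equations implicitly and conclude directly; the uniqueness you claim for those scalar equations is not evident because of the $\bu$ term, but it is also not needed, since existence of one solution close to $1$ suffices.
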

\begin{proof}
We may assume $\utpn\rightharpoonup\utp$, $\utmn\rightharpoonup\utm$, $\uhpn\rightharpoonup\uhp$
in $\hozo$, since $w_n\rightharpoonup w$ in $\hozo$ implies a subsequence of $w_n$
converges pointwise a.e.\ to $w$. From (${\cal{N}}_{ii}$) and Lemma~\ref{L_two},
$$
\min\left\{
\int\ap|u|^{p-2}u\utp,\ -\int\ap|u|^{p-2}u\utm,\
\int\ap|u|^{p-2}u\uhp\right\}\geq\kappa.$$ These three integrals are also bounded above 
by a constant independent of $\mu$ because $\nmu$ is bounded.
It follows from Lemma~\ref{L_one} that the integrals 
$$
\int\ap|\utp|^p,\ \ \int\ap|\utm|^p,\ \ \int\ap|\uhp|^p
$$
are bounded below by a positive constant independent of $\mu$. The Sobolev inequality
now implies that the norms
$$
\left\|\utp\right\|,\ \ \left\|\utm\right\|,\ \ \left\|\uhp\right\| 
$$
are bounded below by a positive constant independent of $\mu$.
From the lower-semicontinuity of the norm,
\begin{equation}\label{thirteen}
\left\|\utp\right\|\leq\liminf\left\|\utpn\right\|,\ \ 
\left\|\utm\right\|\leq\liminf\left\|\utmn\right\|,\ \ 
\left\|\uhp\right\|\leq\liminf\left\|\uhpn\right\|.
\end{equation}
We wish to prove that equalities hold. Otherwise, choose $(\rt,\st,\that)$,
defined by 
\begin{eqnarray*}
\rt&=&\left(\frac{\left\|\utp\right\|^2}{\int\ap|u|^{p-2}u\utp}\right)^{\frac{1}{p-2}},\ 
\st=\left(\frac{\left\|\utm\right\|^2}{-\int\ap|u|^{p-2}u\utm}\right)^{\frac{1}{p-2}},\\ 
\that&=&\left(\frac{\left\|\uhp\right\|^2}{\int\ap|u|^{p-2}u\uhp}\right)^{\frac{1}{p-2}},
\end{eqnarray*}
so that the function
$$
w:=\rt\utp-\st\utm+\that\uhp-\uhm+\ub+\bu
$$
satisfies (${\cal{N}}_{ii}$). By (\ref{thirteen}), the strong convergence in $L^p(\Omega)$,
and what we have just seen,
$$(\rt,\st,\that)\in\,\,[c,1]^3\setminus\{(1,1,1)\},$$
for some $c>0$ independent of $\mu$.
The function $w$ clearly satisfies (${\cal{N}}_{i}$) and (${\cal{N}}_{v}$).
Lemma~\ref{L_one} guarantees that 
(${\cal{N}}_{iv}$) is satisfied for sufficiently large $\mu$.
Consider the estimate
\begin{eqnarray*}
\noalign{$\displaystyle\iu(\rt\utp-\st\utm+\that\uhp-\uhm+\ub+\bu)$}
\qquad\qquad&<&\liminf\iu(\rt\utpn-\st\utmn+\that\uhpn-\uhmn+\ubn+\bun)\\
&\leq&\lim\iu(u_n),
\end{eqnarray*}
where the last inequality is due to Lemma~\ref{L_four}.
It shows that $w$ satisfies (${\cal{N}}_{iii}$).
Therefore $w\in\nmu$ and $\iu(w)<\lim\iu(u_n)$. This is a contradiction.
We have established that equality holds in 
all three of (\ref{thirteen}).
Therefore $u\in\nmu$ for large $\mu$.
\end{proof}
\section{A minimizer in $\nmu$ is a critical point}
In the previous section we obtained a minimizer $u$ of $\iu$ on $\nmu$.
We will now prove that this minimizer is indeed a critical point of $\iu$.
This will be done by using a deformation argument on the manifold
introduced above. Let $\sigma$ be the restriction to the interval
$[1/2,2]^3$ of the $\varsigma$ corresponding to the minimizer $u$.
Recall $\varsigma$ was defined in (\ref{twelve}).
We define a negative gradient flow in a neighborhood of $u$
in the following way.
Let $B_\rho(u):=\{w\in\hozo\::\:\left\|w-u\right\|<\rho\}$,
where $\rho$ is chosen small enough so that 
\begin{equation}\label{seven}
\sigma(\rt,\st,\that)\in B_\rho(u)
\ \Rightarrow \ \oh<\rt,\st,\that<2
\end{equation}
and $w\in B_\rho(u)$ implies that $w$ satisfies
(${\cal{N}}_{i}$), (${\cal{N}}_{iii}$), (${\cal{N}}_{iv}$) and (${\cal{N}}_{v}$),
for sufficiently large $\mu$. Such a $\rho$ exists because the function $u$
satisfies (\ref{six}) and (a), (b), (c) and (d) of Lemma~\ref{L_three}.
Let $\varphi$ be a Lipschitz function, $\varphi:\hozo\to[0,1]$,
such that $\varphi=1$ on $B_{\rho/2}(u)$ and $\varphi=0$ on the complement
of $B_\rho(u)$. Consider the Cauchy problem
\begin{equation}\label{twenty-one}
\left\{\begin{array}{l}
\displaystyle\frac{d\eta}{d\tau}=-\varphi(\eta)\nabla I_\mu(\eta),\\
\vspace{-4mm}\\
\eta(0)=w,\end{array}\right.
\end{equation}
whose solution we denote by $\eta(\tau;w)$. For $\tau\geq 0$, let
$$
\sigma_\tau(\rt,\st,\that)=\eta(\tau;\sigma(\rt,\st,\that)).
$$
\begin{Lem}\label{L_six}
The set $\sigma_\tau\left(\left[1/2,2\right]^3\right)$ intersects $\nmu$ in an nonempty set.
\end{Lem}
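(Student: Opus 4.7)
The plan is a Brouwer degree argument on the parameter cube $[1/2, 2]^3$. I introduce the continuous map
$$
\Psi_\tau(\rt, \st, \that) := \bigl(\iu'(w)(w^{\tilde +}),\ \iu'(w)(w^{\tilde -}),\ \iu'(w)(w^{\hat +})\bigr),
\qquad w = \sigma_\tau(\rt, \st, \that),
$$
where $w^{\tilde +}, w^{\tilde -}, w^{\hat +}$ denote the positive and negative parts of the orthogonal projections of $w$ on $\hoz(\ot)$ and $\hoz(\ohat)$. A zero of $\Psi_\tau$ is precisely a parameter at which $\sigma_\tau(\rt, \st, \that)$ satisfies (${\cal N}_{ii}$), so producing such a zero with $\sigma_\tau$ landing in a region where the remaining Nehari conditions hold will close the argument.

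At $\tau = 0$, $\varsigma(\rt, \st, \that)$ projects on $\hoz(\ot)$ as the disjointly-supported pair $\rt\utp, \st\utm$ and on $\hoz(\ohat)$ analogously, so $w^{\tilde +} = \rt\utp$, $w^{\tilde -} = \st\utm$, $w^{\hat +} = \that\uhp$, and the chain rule applied to $f = \iu \circ \varsigma$ gives
$$
\Psi_0(\rt, \st, \that) = \bigl(\rt\,\partial_{\rt} f,\ -\st\,\partial_{\st} f,\ \that\,\partial_{\that} f\bigr).
$$
Since $\rt, \st, \that > 0$, zeros of $\Psi_0$ coincide with critical points of $f$. The auxiliary function $g$ introduced in the proof of Lemma~\ref{L_four} has $(1,1,1)$ as its unique critical point in $[1/2, 2]^3$ (each factor $t \mapsto t^2/2 - t^p/p$ has a unique critical point at $t=1$), and (\ref{twenty}) transfers this uniqueness to $f$ for large $\mu$. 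Because $D\Psi_0(1,1,1) = \mbox{diag}(1,-1,1)\cdot D^2 f(1,1,1)$ and $D^2 f(1,1,1)$ is negative definite, the Jacobian is nonsingular and $\deg(\Psi_0, (1/2, 2)^3, 0)\neq 0$.

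For the homotopy step, I observe that for $(\rt, \st, \that) \in \partial [1/2, 2]^3$ at least one of $|\rt-1|, |\st-1|, |\that-1|$ is $\ge 1/2$, so by orthogonality together with the bound $\kappa$ of Lemma~\ref{L_two},
$$
\|\varsigma(\rt, \st, \that) - u\|^2 = (\rt-1)^2\|\utp\|^2 + (\st-1)^2\|\utm\|^2 + (\that-1)^2\|\uhp\|^2 \geq \kappa^2/4.
$$
Taking $\rho < \kappa/2$ (compatible with (\ref{seven})) forces $\sigma(\rt, \st, \that) \notin B_\rho(u)$ on $\partial [1/2, 2]^3$; the cutoff $\varphi$ vanishes there and the Cauchy problem (\ref{twenty-one}) is stationary, so $\sigma_\tau = \sigma$ and $\Psi_\tau = \Psi_0 \neq 0$ on the boundary. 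Homotopy invariance then yields $\deg(\Psi_\tau, (1/2, 2)^3, 0) \neq 0$ and a zero $(\rt^*, \st^*, \that^*) \in (1/2, 2)^3$, for which $w^* := \sigma_\tau(\rt^*, \st^*, \that^*)$ satisfies (${\cal N}_{ii}$).

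To conclude $w^* \in \nmu$, it suffices to show $w^* \in B_\rho(u)$, since by the standing choice of $\rho$ before (\ref{twenty-one}) the conditions (${\cal N}_i$), (${\cal N}_{iii}$), (${\cal N}_{iv}$), (${\cal N}_v$) then hold automatically. Because $\varphi$ vanishes on the exterior of $B_\rho(u)$, a trajectory starting outside is stationary, and a trajectory starting inside cannot cross $\partial B_\rho(u)$ in finite time (its speed decelerates to $0$ as the cutoff vanishes on approach). If $w^* \notin B_\rho(u)$ then $\sigma(\rt^*, \st^*, \that^*) \notin B_\rho(u)$ as well, so $\Psi_\tau = \Psi_0$ at that parameter, forcing $(\rt^*, \st^*, \that^*) = (1,1,1)$; but $\sigma(1,1,1) = u \in B_\rho(u)$, a contradiction. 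The main technical point is this flow-confinement fact, which is what lets the degree computation on the full cube still localize the relevant zero inside $B_\rho(u)$.
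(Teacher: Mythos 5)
Your proof is correct and follows essentially the same route as the paper: a Brouwer degree argument on $[1/2,2]^3$ exploiting that condition (\ref{seven}) makes the flow (\ref{twenty-one}) stationary on $\sigma\left(\partial[1/2,2]^3\right)$, with the uniform estimates behind Lemmas~\ref{L_one}, \ref{L_two} and \ref{L_four} guaranteeing a nonzero degree for large $\mu$. The differences are minor — the paper computes the degree of the normalized map $\Psi=\left(\tilde{\psi}^+,\tilde{\psi}^-,\hat{\psi}\right)\circ\sigma$ at the value $(1,1,1)$ via its explicit $(1+o(1))t^{p-2}$ form rather than your gradient-type map at $0$ via nondegeneracy and uniqueness of the critical point of $f$ — and your closing flow-confinement argument usefully spells out why the zero produced by the degree actually lies in $\nmu$ (satisfying (${\cal{N}}_{i}$), (${\cal{N}}_{iii}$)--(${\cal{N}}_{v}$) and not only (${\cal{N}}_{ii}$)), a point the paper leaves implicit.
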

\begin{proof}
Consider the maps $\tilde{\phi}^\pm$, $\hat{\phi}$, $\tilde{\psi}^\pm$, $\hat{\psi}$ from
$\left\{w\in\hozo\::\:\tilde{w}^\pm\not\equiv 0, \whp\not\equiv 0\right\}$ to $\R$, defined by
$$\begin{array}{ll}\displaystyle
\tilde{\phi}^\pm(w)=\frac{\pm\int\ap|w|^{p-2}w\tilde{w}^\pm}{\left\|\tilde{w}^\pm\right\|^2},&
\displaystyle\hat{\phi}(w)=\frac{\int\ap|w|^{p-2}w\whp}{\left\|\whp\right\|^2},\\
\vspace{-2mm}&\\
\displaystyle\tilde{\psi}^\pm(w)=\frac{\int\ap|\tilde{w}^\pm|^p}{\left\|\tilde{w}^\pm\right\|^2},&
\displaystyle\hat{\psi}(w)=\frac{\int\ap|\whp|^p}{\left\|\whp\right\|^2}.
\end{array}
$$
These maps are well defined on $\sigma_\tau\left([1/2,2]^3\right)$, because if $w\in B_\rho(u)$,
then $w$ satisfies (${\cal{N}}_{i}$). We finally define
$$
\Phi_\tau:=\left(\tilde{\phi}^+,\tilde{\phi}^-,\hat{\phi}\right)\circ\sigma_\tau
$$
and
$$
\Psi:=\left(\tilde{\psi}^+,\tilde{\psi}^-,\hat{\psi}\right)\circ\sigma,
$$
from $\left([1/2,2]^3\right)$ to $\R^3$.
Since $\int|\bu|^p=o(1)$ uniformly in $u$ and $\mu$ and the value of $\kappa$
in Lemma~\ref{L_two} is independent of $\mu$,
\begin{eqnarray}
\Psi(\rt,\st,\that)&=&\left(\rt^{p-2}\tilde{\psi}^+(u),\st^{p-2}\tilde{\psi}^-(u),
\that^{p-2}\hat{\psi}(u)\right)\nonumber\\
&=&\left((1+o(1))\rt^{p-2},(1+o(1))\st^{p-2},(1+o(1))\that^{p-2}\right),\label{nineteen}
\end{eqnarray}
with the last three $o(1)$ independent of $u$ and $\mu$. As a consequence,
$$
\mbox{dist}\left(\Psi\left(\partial[1/2,2]^3\right), (1,1,1)\right)\geq c>0,
$$
the constant $c$ being independent of $u$ and $\mu$.
We deduce from (\ref{nineteen}) that for large $\mu$,
$$
\mbox{deg}\,\left(\Psi,\left[1/2,2\right]^3,(1,1,1)\right)=1.
$$
Notice that condition (\ref{seven}) and the definition of the flux (\ref{twenty-one}) guarantee
$$
\left.\Phi_\tau\right|_{\partial\left[1/2,2\right]^3}=
\left.\Phi_0\right|_{\partial\left[1/2,2\right]^3}=
\left.\Psi\right|_{\partial\left[1/2,2\right]^3}+o(1)
$$
and therefore
$$
\mbox{deg}\,\left(\Phi_\tau,\left[1/2,2\right]^3,(1,1,1)\right)=1.
$$
for $\mu$ large enough. This proves that 
$$\sigma_\tau\left(\left[1/2,2\right]^3\right)\cap\nmu\neq\emptyset.$$
\end{proof}

We are ready to give the\\
\begin{altproof}{\/\/ {\rm Proposition~\ref{prop}}}
Let $\mu$ be large and $u_\mu$ be a minimizer of $\iu$ restricted to $\nmu$.
The existence of such a $u_\mu$ was proven in Lemma~\ref{L_five}.
Suppose that $\iu^\prime(u_\mu)\neq 0$. By Lemma~\ref{L_four}, with $u=u_\mu$,
$\max\iu\circ\sigma\left(\left[1/2,2\right]^3\right)=\iu(u_\mu)$, and so for any small $\tau>0$,
$$
\max\iu\circ\sigma_\tau\left(\left[1/2,2\right]^3\right)<\iu(u_\mu).
$$
This 
contradicts Lemma~\ref{L_six}. So $\iu^\prime(u_\mu)=0$, and the minimizer of
$\iu$ on $\nmu$ is a weak solution of (\ref{one}).

Consider now $u$ as in (\ref{fifteen}). 
Properties (\ref{sixteen}), (\ref{seventeen}) and (\ref{eighteen})
follow from Lemma~\ref{L_two} and Lemma~\ref{L_three} (c), (d), as
$$
\min\left\{
\int\ap|u_\mu|^{p-2}u_\mu\tilde{u}_\mu^+,\ 
-\int\ap|u_\mu|^{p-2}u_\mu\tilde{u}_\mu^-,\ 
\int\ap|u_\mu|^{p-2}u_\mu\hat{u}_\mu^+
\right\}\geq\kappa.
$$
\end{altproof}

Theorem~\ref{zero} can be proved as Proposition~\ref{prop} with obvious adaptations.


\begin{thebibliography}{99}

\bibitem{AW}
Ackermann, N.; Weth, T..
Multibump solutions of nonlinear periodic Schrödinger equations in a degenerate setting.
Commun.\ Contemp.\ Math.\ 7 (2005), no.\ 3, 269--298.



\bibitem{AdP}
Alama, S.; Del Pino, M..
Solutions of elliptic equations with indefinite nonlinearities via Morse theory and linking.
Ann.\ Inst.\ H.\ Poincaré Anal.\ Non Linéaire 13 (1996), no.\ 1, 95--115.


\bibitem{AT}
Alama, S.; Tarantello, G..
On semilinear elliptic equations with indefinite nonlinearities.
Calc.\ Var.\ Partial Differential Equations 1 (1993), no.\ 4, 439--475.


\bibitem{BCW}
Bartsch, T.; Clapp, M.; Weth, T..
Configuration spaces, transfer, and 2-nodal solutions of a semiclassical nonlinear
Schr\"{o}dinger equation.
Math.\ Ann.\ to appear.

\bibitem{BW}
Bartsch, T.; Weth, T..
A note on additional properties of sign changing solutions to superlinear elliptic equations.
Topol.\ Methods Nonlinear Anal. 22 (2003), no.\ 1, 1--14.


\bibitem{BCN}
Berestycki, H.; Capuzzo-Dolcetta, I.; Nirenberg, L..
Variational methods for indefinite superlinear homogeneous elliptic problems.
NoDEA Nonlinear Differential Equations Appl.\ 2 (1995), no.\ 4, 553--572.


\bibitem{BGH}
Bonheure, D.; Gomes, J.M.; Habets, P..
Multiple positive solutions of superlinear elliptic problems with sign-changing weight.
J.\ Differential Equations 214 (2005), no.\ 1, 36--64.


\bibitem{CCN}
Castro, A.; Cossio, J.; Neuberger, J.M..
A sign-changing solution for a superlinear Dirichlet problem.
Rocky Mountain J.\ Math.\ 27 (1997), no.\ 4, 1041--1053.


\bibitem{CSS}
Cerami, G.; Solimini, S.; Struwe, M..
Some existence results for superlinear elliptic boundary value problems involving critical exponents.
J.\ Funct.\ Anal.\ 69 (1986), no.\ 3, 289--306.


\bibitem{CW}
Clapp, M.; Weth, T..
Minimal nodal solutions of the pure critical exponent problem on a symmetric domain. 
Calc.\ Var.\ Partial Differential Equations 21 (2004), no.\ 1, 1--14. 

\bibitem{CRT}
Costa, D.G.; Ramos, M.; Tehrani, H..
Non-zero solutions for a Schrödinger equation with indefinite linear and nonlinear terms.
Proc.\ Roy.\ Soc.\ Edinburgh Sect.\ A 134 (2004), no.\ 2, 249--258.


\bibitem{CZR1}
Coti Zelati, V.; Rabinowitz, P.H..
Homoclinic orbits for second order Hamiltonian systems possessing superquadratic potentials.
J.\ Amer.\ Math.\ Soc.\ 4 (1991), no.\ 4, 693--727.


\bibitem{CZR2}
Coti Zelati, V.; Rabinowitz, P.H..
Homoclinic type solutions for a semilinear elliptic PDE on $\R^n$.
Comm.\ Pure Appl.\ Math.\ 45 (1992), no.\ 10, 1217--1269.


\bibitem{dPF}
Del Pino, M.; Felmer, P.L..
Multi-peak bound states for nonlinear Schrödinger equations.
Ann.\ Inst.\ H.\ Poincaré Anal.\ Non Linéaire 15 (1998), no.\ 2, 127--149.


\bibitem{GHZ}
Gaudenzi, M.; Habets, P.; Zanolin, F..
A seven-positive-solutions theorem for a superlinear problem. 
Adv.\ Nonlinear Stud.\ 4 (2004), no.\ 2, 149--164.


\bibitem{LW}
Li, Y.; Wang, Z.Q..
Gluing approximate solutions of minimum type on the Nehari manifold.
Proceedings of the USA-Chile Workshop on Nonlinear Analysis (Viña del Mar-Valparaiso, 2000), 215--223,
Electron.\ J.\ Differ.\ Equ.\ Conf., 6, Southwest Texas State Univ., San Marcos, TX, 2001.



\bibitem{R}
Ramos, M..
Remarks on a priori estimates for superlinear elliptic problems. 
Topological methods, variational methods and their applications (Taiyuan, 2002), 193--200, 
World Sci.\ Publ., River Edge, NJ, 2003.


\bibitem{RT}
Ramos, M.; Tavares, H..
Solutions with multiple spike patterns for an elliptic system. Preprint.

\bibitem{RTT}
Ramos, M.; Terracini, S.; Troestler, C..
Superlinear indefinite elliptic problems and Poho\v zaev type identities.
J.\ Funct.\ Anal.\ 159 (1998), no.\ 2, 596--628.


\bibitem{S}
Séré, É..
Existence of infinitely many homoclinic orbits in Hamiltonian systems.
Math.\ Z.\ 209 (1992), no.\ 1, 27--42.


\bibitem{T}
Tehrani, H..
On indefinite superlinear elliptic equations.
Calc.\ Var.\ Partial Differential Equations 4 (1996), no.\ 2, 139--153.


\end{thebibliography}
\end{document}